\definecolor{webgreen}{rgb}{0,.5,0}
\definecolor{webbrown}{rgb}{.6,0,0}
\newcommand{\seqnum}[1]{\href{https://oeis.org/#1}{\rm \underline{#1}}}
\theoremstyle{plain}
\newtheorem{theorem}{Theorem}
\theoremstyle{definition}
\newtheorem{example}[theorem]{Example}
\newtheorem{problem}{Problem}
\theoremstyle{remark}
\title{Congruence properties of combinatorial sequences via Walnut
and the Rowland--Yassawi--Zeilberger automaton}
\author{Narad Rampersad\footnote{
Department of Math/Stats,
University of Winnipeg,
515 Portage Ave.,
Winnipeg, MB, R3B 2E9
Canada; {\tt narad.rampersad@gmail.com}.}\quad and\quad Jeffrey
Shallit\footnote{School of Computer Science, University of Waterloo,
Waterloo, ON  N2L 3G1, Canada; {\tt shallit@uwaterloo.ca}.}}
\begin{document}
\maketitle
\begin{abstract}
Certain famous combinatorial sequences, such as the Catalan numbers
and the Motzkin numbers, when taken modulo a prime power, can be computed by
finite automata.  Many theorems about such sequences can therefore be
proved using Walnut, which is an implementation of a decision procedure
for proving various properties of automatic sequences.  In this paper we
explore some results (old and new) that can be proved using this method.
\end{abstract}

\section{Introduction}
We study the properties of two famous combinatorial sequences.
For $n \geq 0$, let $$C_n = \frac{1}{n+1}\binom{2n}{n}$$ denote the
\emph{$n$-th Catalan number} and let $$M_n = \sum_{k=0}^{\lfloor n/2
  \rfloor}\binom{n}{2k}C_k$$ denote the \emph{$n$-th Motzkin number}.
For more about the Catalan numbers, see
\cite{Stanley}.

Many authors have studied congruence properties of these and other
sequences modulo primes $p$ or prime powers $p^\alpha$.
Notably, Alter and Kubota \cite{AK73} studied the Catalan numbers modulo $p$,
and Deutsch and Sagan \cite{DS06} studied many sequences,
including the Catalan numbers, Motzkin numbers, Central Delannoy numbers,
Ap\'ery numbers, etc., modulo certain prime powers.
Eu, Liu, and Yeh \cite{ELY08} studied the Catalan and Motzkin numbers
modulo $4$ and $8$, and Krattenthaler and M\"uller \cite{KM18} studied
the Motzkin numbers and related sequences modulo powers of $2$.
Rowland and Yassawi \cite{RY15} and Rowland and Zeilberger \cite{RZ14}
gave different methods to compute finite automata that compute the
sequences $(C_n \bmod p^\alpha)_{n \geq 0}$ and $(M_n \bmod
p^\alpha)_{n \geq 0}$ (and many other similar sequences),
where $p^\alpha$ is a prime power.  Rowland and
Zeilberger provide a number of these automata for different $p^\alpha$
at the website
\begin{center}
  \url{https://sites.math.rutgers.edu/~zeilberg/mamarim/mamarimhtml/meta.html}
\end{center}
along with the Maple code used to compute them.
We use some of these automata, along with the program Walnut
\cite{Walnut}, available at the website
\begin{center}
\url{https://cs.uwaterloo.ca/~shallit/walnut.html}
\end{center}
to study properties of these sequences.

We use the Rowland--Zeilberger algorithm (and Walnut) as a black-box,
so we do not discuss the theory behind it.  We just mention that
this algorithm applies to any sequence of numbers that can be defined
as the \emph{constant term} of $[P(x)]^nQ(x)$, where $P$ and $Q$ are
\emph{Laurent polynomials}.  In particular, the $n$-th Catalan number
is the constant term of $(1/x+2+x)^n(1-x)$ and the $n$-th Motzkin number
is the constant term of $(1/x+1+x)^n(1-x^2)$.

Note that the automata produced by this method read their input in
least-significant-digit-first format.  All of the automata in this
paper therefore also follow this convention.  We use the notation
$(n)_k$ to denote the base-$k$ representation of
$n$ in the lsd-first format.

Burns has posted several manuscripts to the arXiv
\cite{Bur_A,Bur_B,Bur_C,Bur_D,Bur_E,Bur_F} in which he investigates
various properties of the Catalan and Motzkin numbers modulo primes $p$
by analyzing structural properties of automata computed using the
Rowland--Yassawi algorithm.  This paper takes a similar approach, but
we use Walnut to simplify/automate much of the analysis.

\section{Motzkin numbers}
Deutsch and Sagan \cite{DS06} gave a characterization of ${\bf m}_2 = (M_n \bmod 2)_{n \geq
  0}$ that involves the Thue--Morse sequence $${\bf t} = (t_n)_{n \geq
  0} = (0,1,1,0,1,0,0,1,\ldots).$$  Let $${\bf c} = (c_n)_{n \geq 0} =
(1,3,4,5,7,\ldots)$$ denote the starting positions of the ``runs'' in
${\bf t}$, excluding the first run (which, of course, starts at position
$0$).

\begin{theorem}[Deutsch and Sagan] The Motzkin number $M_n$ is even if
  and only if either $n \in
  4{\bf c}-2$ or $n \in 4{\bf c}-1$.
\end{theorem}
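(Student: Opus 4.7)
The plan is to rephrase the theorem as a single first-order statement over $\mathbb{N}$ involving two automatic sequences---the Motzkin numbers modulo~$2$ and the Thue--Morse sequence $\mathbf{t}$---and then let Walnut dispatch it.

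First, I would load into Walnut the lsd-first DFA $\mathcal{M}_2$ produced by the Rowland--Zeilberger algorithm, which on input $(n)_2$ outputs $M_n \bmod 2$; Walnut already supplies the Thue--Morse automaton (call it \texttt{T}) on input $(c)_2$ returning $t_c$. Second, I would translate the set membership on the right-hand side into an automatic predicate. A position $c \geq 1$ starts a non-initial run of $\mathbf{t}$ exactly when $t_c \neq t_{c-1}$, so $c \in \mathbf{c}$ iff $c \geq 1 \wedge T[c] \neq T[c-1]$. Hence ``$n \in 4\mathbf{c} - 2 \ \vee \ n \in 4\mathbf{c} - 1$'' is equivalent to
\[
\exists c\ \bigl(c \geq 1 \ \wedge\ T[c] \neq T[c-1] \ \wedge\ (n+2 = 4c \ \vee\ n+1 = 4c)\bigr),
\]
which is a legal Walnut formula since addition and multiplication by fixed constants are in Presburger arithmetic.

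Third, the theorem itself becomes the universally quantified biconditional
\[
\forall n\ \Bigl(\mathcal{M}_2(n) = 0 \ \Longleftrightarrow\ \exists c\ \bigl(c \geq 1 \wedge T[c] \neq T[c-1] \wedge (n+2 = 4c \vee n+1 = 4c)\bigr)\Bigr),
\]
and Walnut's \texttt{eval} command will either certify it as TRUE or return a counterexample. Because both $\mathcal{M}_2$ and \texttt{T} are lsd-first base-$2$ automata and the rest is Presburger, the decision procedure is guaranteed to terminate.

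The main obstacle is bookkeeping rather than mathematics: one must be careful to use matching digit-orderings for $\mathcal{M}_2$ and \texttt{T}, express $4c$, $n+1$, $n+2$ in Walnut's Presburger syntax (typically by introducing auxiliary existentially quantified variables), and correctly import the minimized Motzkin-mod-$2$ automaton from the Rowland--Zeilberger data. Once the predicate is assembled, verification is entirely mechanical; the nontrivial combinatorial content of Deutsch and Sagan's theorem is absorbed into the correctness of the Rowland--Zeilberger automaton together with Walnut's decision procedure.
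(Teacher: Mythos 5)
Your proposal is correct and follows essentially the same route as the paper: load the Rowland--Zeilberger automaton for $M_n \bmod 2$, define the run-starting positions of $\mathbf{t}$ as an automatic predicate, and have Walnut verify the universally quantified biconditional with the conditions $n+2=4c$ or $n+1=4c$. Your characterization of $\mathbf{c}$ via $c \geq 1 \wedge t_c \neq t_{c-1}$ is a slightly leaner (and equivalent) predicate than the paper's, which detects run starts by explicitly checking for maximal blocks of length $1$ or $2$, but the overall argument is the same.
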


\begin{proof}
  We can prove this result using Walnut.  The Rowland--Zeilberger algorithm
  produces the algorithm in Figure~\ref{MOT2}, which, when fed with $(n)_2$,\
  outputs $M_n \bmod 2$.

  \begin{figure}[htb]
    \centering
    \includegraphics[scale=0.75]{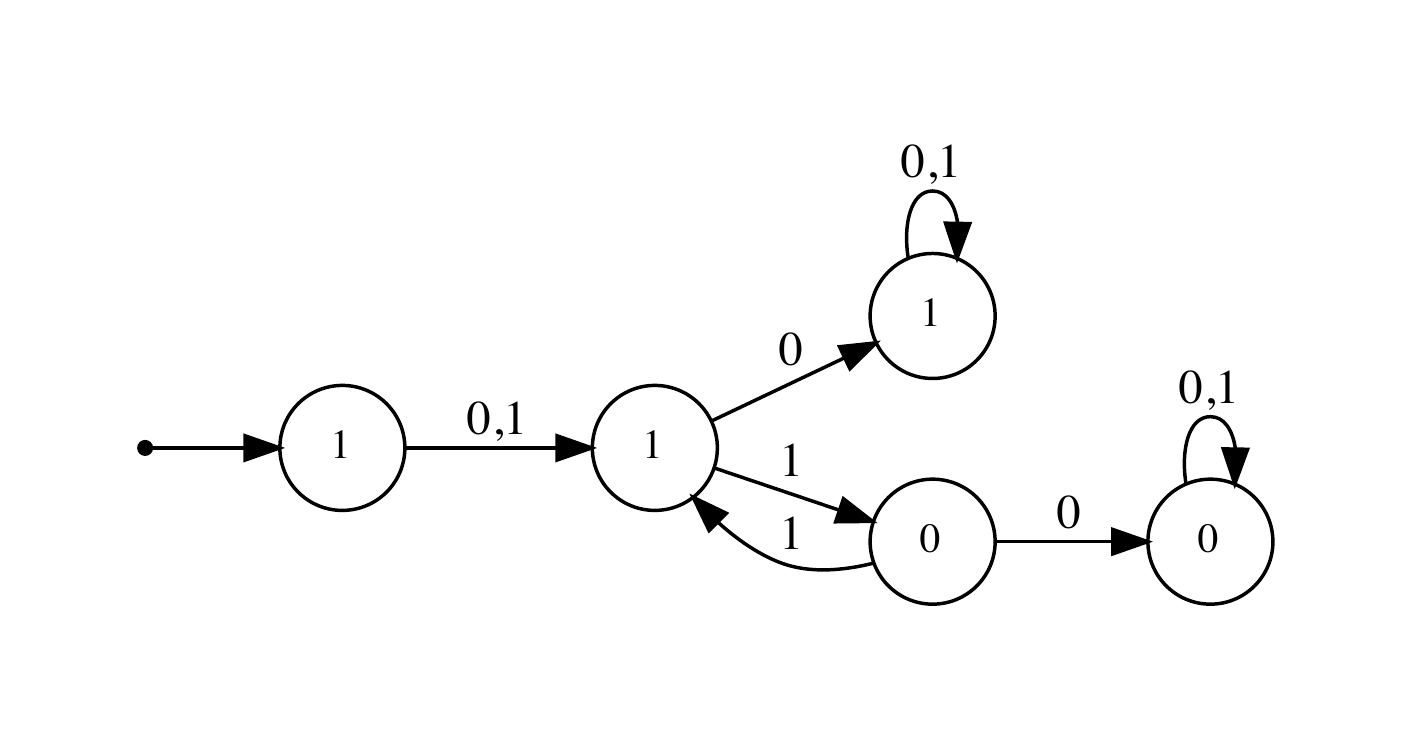}
    \caption{Automaton for $M_n \bmod 2$}\label{MOT2}
  \end{figure}

Next we use Walnut to construct an
automaton for the sequence ${\bf c}$.  The commands
\begin{verbatim}
def tm_blocks "?lsd_2 n>=1 & (At t<n => T_lsd[i+t]=T_lsd[i]) &
    T_lsd[i+n]!=T_lsd[i] & (i=0|T_lsd[i-1]!=T_lsd[i])":
def tm_block_start "?lsd_2 i>=1 & ($tm_blocks(i,1)|$tm_blocks(i,2))":
\end{verbatim}
produce the automaton \texttt{tm\_block\_start} given in
Figure~\ref{tm_block_start}, which computes ${\bf c}$.
We see that the elements of ${\bf c}$ are
$$ \{ m4^k : m \text{ is odd and } k \geq 0 \}.$$

  \begin{figure}[htb]
    \centering
    \includegraphics[scale=0.75]{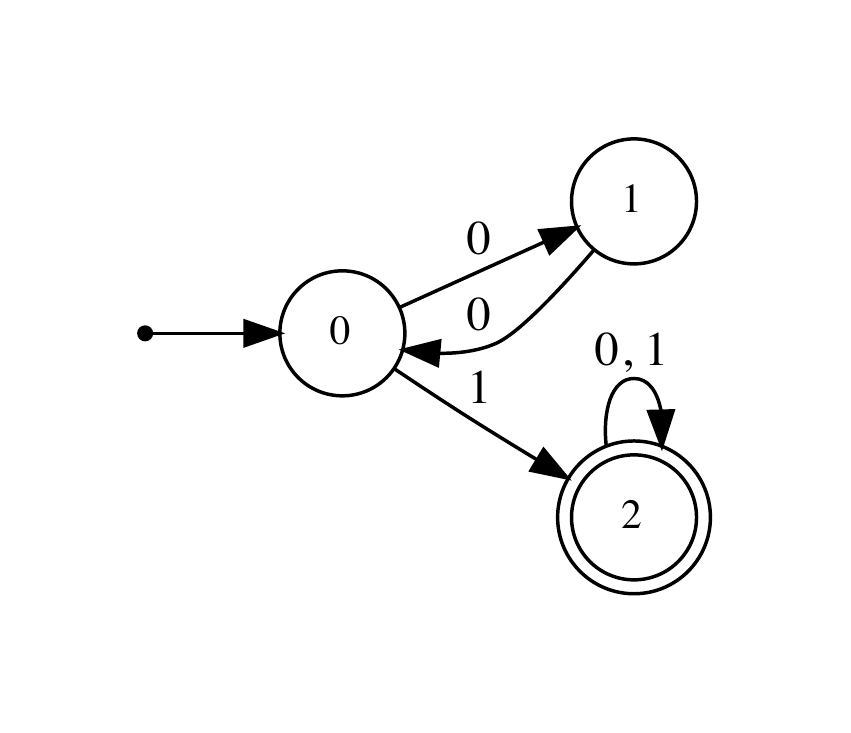}
    \caption{Automaton for starting positions of ``runs'' in {\bf t}}\label{tm_block_start}
  \end{figure}

To complete the proof of the theorem, it suffices to execute the
Walnut command
\begin{verbatim}
eval even_mot "?lsd_2 An (MOT2[n]=@0 <=> Ei $tm_block_start(i) &
    (n+2=4*i | n+1=4*i))":
\end{verbatim}
which produces the output ``TRUE''.
\end{proof}

Deutsch and Sagan also characterized ${\bf m}_3 = (M_n \bmod 3)_{n \geq
  0}$:
  
\begin{theorem}{(Deutsch and Sagan)}
The Motzkin number $M_n$ satisfies
\[
M_n \equiv_3
\begin{cases}
1, & \text{ if either } (n)_3 = 0w, w \in \{0,1\}^* \text{ or }
(n+2)_3 = 0w, w \in \{0,1\}^*, \\
2, & \text{ if } (n+1)_3 = 0w, w \in \{0,1\}^*, \\
0, & \text{ otherwise.}
\end{cases}
\]
\end{theorem}

This can also be obtained directly from the automaton for ${\bf m}_3$.
If we examine ${\bf m}_5 = (M_n \bmod 5)_{n \geq 0}$, however, we
discover that its behaviour is very different
from that of ${\bf m}_3$.  Deutsch and Sagan determined the positions
of the $0$'s in ${\bf m}_5$.

\begin{theorem}{(Deutsch and Sagan)}
The Motzkin number $M_n$ is divisible by $5$ if and only if
$n$ is of the form
\[ (5i+1)5^{2j}-2, \quad (5i+2)5^{2j-1}-1 \quad
(5i+3)5^{2j-1}-2 \quad (5i+4)5^{2j}-1 . \]
\end{theorem}

\begin{proof}
The Rowland--Zeilberger algorithm gives the automaton in Figure~\ref{MOT5}, which fully characterizes ${\bf m}_5$.

  \begin{figure}[htb]
    \centering
    \includegraphics[scale=0.60]{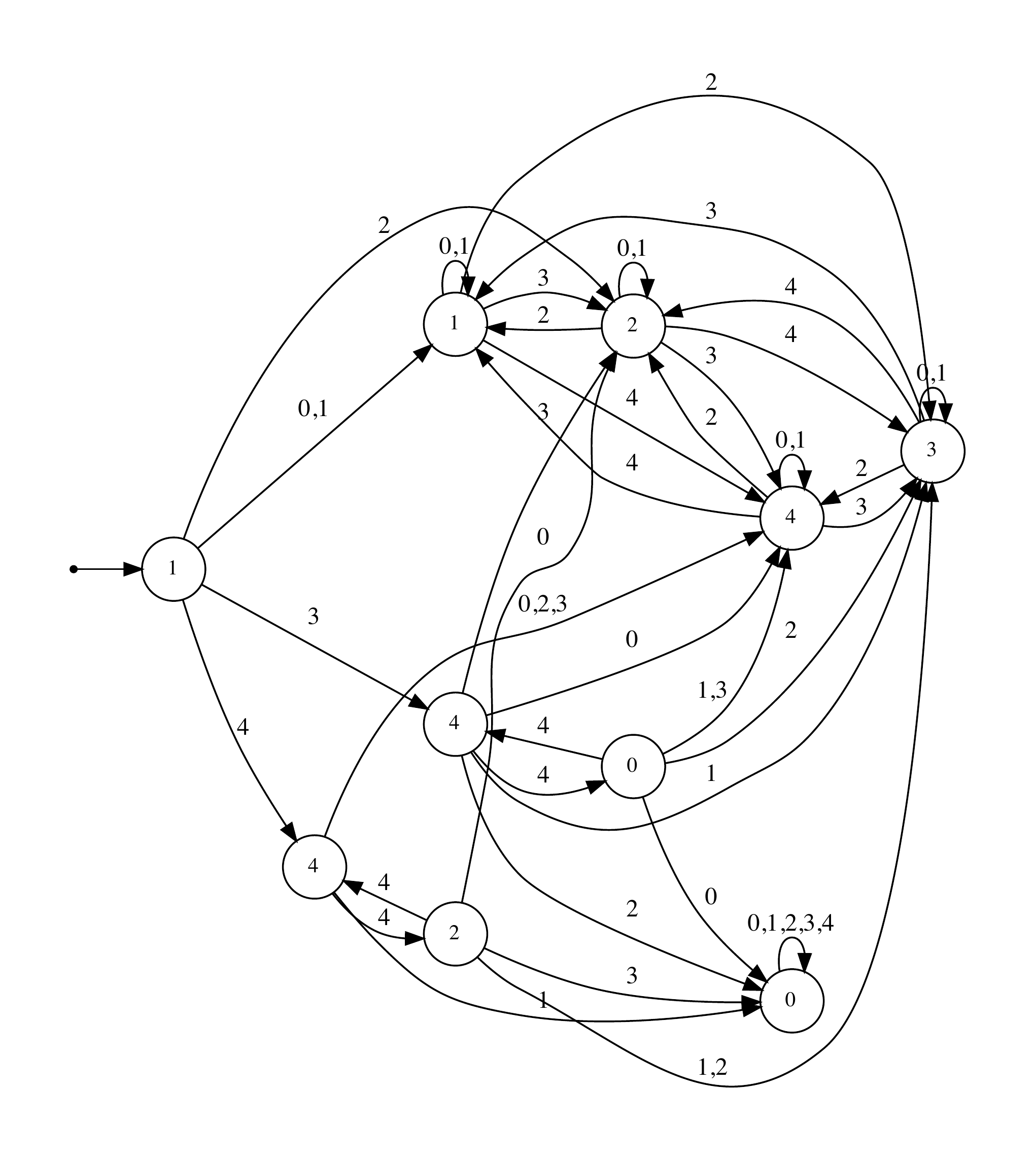}
    \caption{Automaton for $M_n \bmod 5$}\label{MOT5}
  \end{figure}

The Walnut command
\begin{verbatim}
eval mot5mod0 "?lsd_5 MOT5[n]=@0":
\end{verbatim}
produces the automaton in Figure~\ref{mot5mod0}, from which
one easily derives the result.
  \begin{figure}[htb]
    \centering
    \includegraphics[scale=0.75]{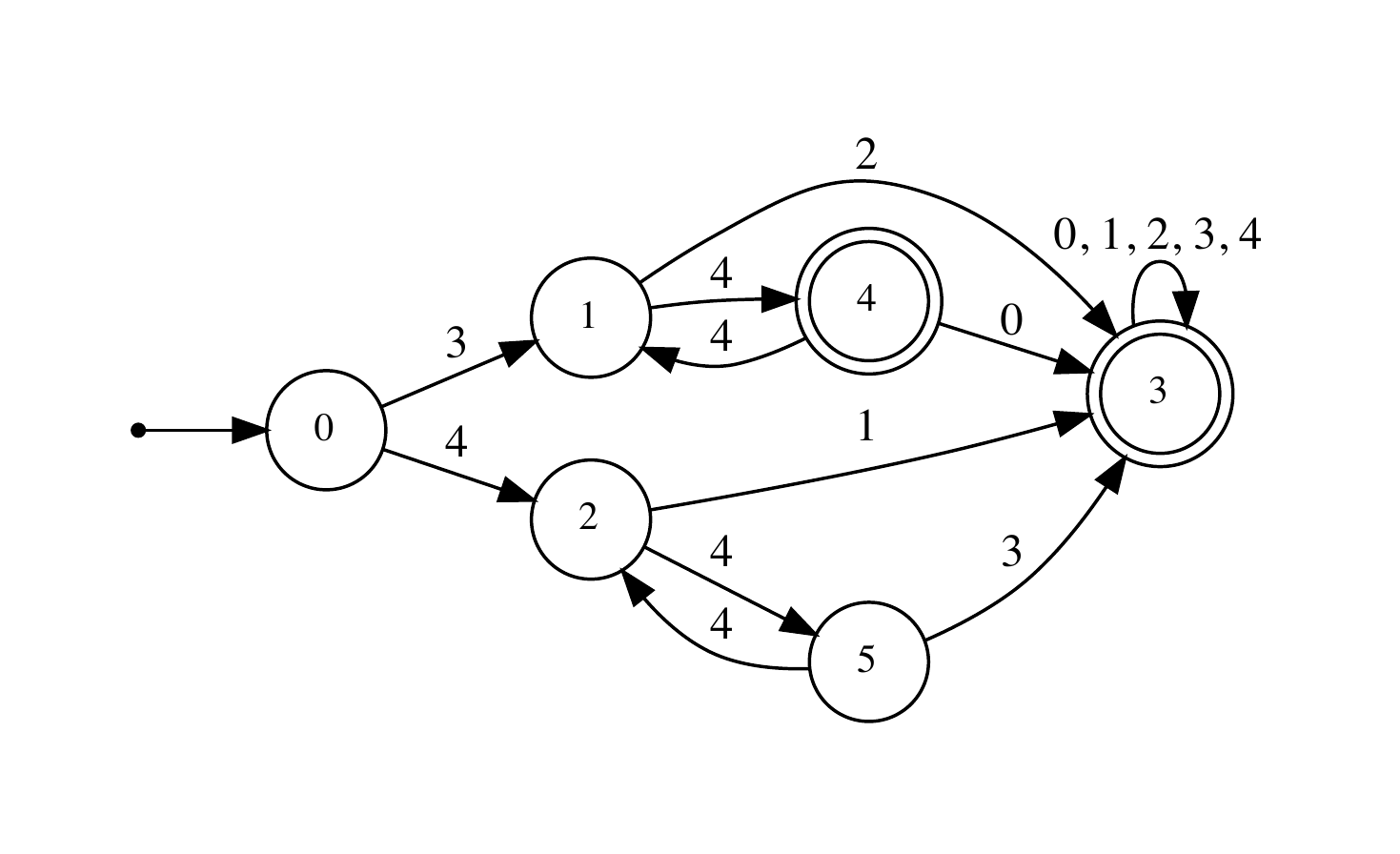}
    \caption{Automaton for the positions of the $0$'s in ${\bf m}_5$}\label{mot5mod0}
  \end{figure}
\end{proof}

One notices that ${\bf m}_3$ contains
arbitrarily large runs of $0$'s, whereas ${\bf m}_5$ does
not have this property.  We can use Walnut to determine the types of repetitions
that are present in ${\bf m}_5$, but we first need to introduce some
definitions.

Let $w = w_1w_2\cdots w_n$ be a word of length $n$ and
\emph{period} $p$; i.e., $w_i = w_{i+p}$  for $i = 0, \ldots, n-p$.
If $p$ is the smallest period of $w$, we say that the \emph{exponent}
of $w$ is $n/p$.  We also say that $w$ is an \emph{$(n/p)$-power} of
\emph{order $p$}.  Words of exponent $2$ (resp.~$3$) are called
\emph{squares} (resp.~\emph{cubes}).  If ${\bf x}$ is an infinite
sequence, we define the \emph{critical exponent} of ${\bf x}$ as
$$ \sup \{ e \in \mathbb{Q}: \text{ there is a factor of } {\bf x}
   \text{ with exponent } e \}. $$

\begin{theorem}
The sequence ${\bf m}_5$ has critical exponent $3$.  Furthermore,
the only cubes in ${\bf m}_5$ are $111$, $222$, $333$, and $444$.
\end{theorem}

\begin{proof}
We execute the Walnut commands
\begin{verbatim}
eval tmp "?lsd_5 Ei,n (n>=1) & At (t<=2*n) => MOT5[i+t]=MOT5[i+t+n]":
eval tmp "?lsd_5 Ei (n>=1) & At (t<2*n) => MOT5[i+t]=MOT5[i+t+n]":
\end{verbatim}
and note that the first outputs ``FALSE'', indicating that ${\bf m}_5$
has no factors of exponent larger than $3$, and the second produces
an automaton that only accepts $n=1$, indicating that the only cubes
in ${\bf m}_5$ have order $1$.  By inspecting a prefix of ${\bf m}_5$,
one sees that $111$, $222$, $333$, and $444$ all occur.
\end{proof}

We can also prove that every pattern of residues that appears in
${\bf m}_5$ appears infinitely often, and furthermore, we can
give a bound on when the next occurrence of a pattern will appear
in ${\bf m}_5$.  We say that a sequence ${\bf x}$ is
\emph{uniformly recurrent} if for every factor $w$ of ${\bf x}$,
there is a constant $c$ such that every occurrence of $w$ in ${\bf x}$
is followed by another occurrence of $w$ at distance at most $c$.

Note that ${\bf m}_3$ is \emph{not} uniformly recurrent.  This is
due to the presence of arbitrarily large runs of $0$'s in ${\bf m}_3$.
On the other hand, the sequence ${\bf m}_5$ exhibits rather different
behaviour.

\begin{theorem}
The sequence ${\bf m}_5$ is uniformly recurrent.  Furthermore,
if $w$ has length $n$ and occurs at position $i$ in ${\bf m}_5$,
then there is another occurrence of $w$ at some position $j$,
where $i < j \leq i+200n$.  The bound $200n$ cannot be replaced by
$200n-1$.
\end{theorem}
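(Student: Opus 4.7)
The plan is to reduce both assertions to first-order statements in $(\mathbb{N}, +, V_5)$ and dispatch them to Walnut using the automaton \texttt{MOT5} from Figure~\ref{MOT5}. Because \texttt{MOT5} reads input in lsd base-$5$, the set of triples $(i,j,n)$ such that the length-$n$ factors of $\mathbf{m}_5$ starting at $i$ and at $j$ agree is Walnut-definable via a single bounded universal quantifier on a position $t < n$, and any polynomial bound in $n$ can be written using constant multipliers that Walnut supports in linear arithmetic.

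First I would prove uniform recurrence together with the explicit linear bound in one shot. The property ``for every length $n \geq 1$ and every position $i$, the length-$n$ factor at $i$ reappears at some position $j$ with $i < j \leq i + 200n$'' is encoded by a Walnut command of roughly the form
\begin{verbatim}
eval mot5_urec "?lsd_5 An,i (n>=1) => Ej (j>i) & (j<=i+200*n) &
    (At (t<n) => MOT5[i+t]=MOT5[j+t])":
\end{verbatim}
and I expect the output \texttt{TRUE}. This single verification yields both uniform recurrence and the explicit linear bound $200n$.

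Next, to show that $200n$ cannot be replaced by $200n - 1$, I would evaluate the same formula with $200n$ replaced by $200n - 1$ and expect the output \texttt{FALSE}; equivalently, I would use \texttt{def} to construct the automaton accepting those pairs $(i,n)$ for which the first return of the length-$n$ factor at position $i$ occurs at distance exactly $200n$, and read a concrete small witness off the resulting automaton. Either route exhibits a factor whose next occurrence appears exactly $200n$ positions later.

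The main obstacle is computational rather than mathematical. Walnut represents the term $200 \cdot n$ by an addition automaton whose size grows with the multiplier, and the nested bounded universal quantifier on $t$ can produce large intermediate automata; the single-shot command may therefore be slow or memory-hungry. If it is infeasible in one call, I would first verify uniform recurrence with a loose bound of the form $cn$ for a large $c$ (to certify the linear-gap property), and then pin down the optimal constant $c = 200$ by running the quantified query for successively smaller multipliers until the tight threshold emerges from an accepting witness.
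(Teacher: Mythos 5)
Your proposal is correct and follows essentially the same route as the paper: the paper defines a helper predicate \texttt{mot5faceq} for factor equality and then runs the two \texttt{eval} queries with bounds $j < i+200n+1$ (returning TRUE) and $j < i+200n$ (returning FALSE), which is exactly your inlined formulation up to notation. The only differences are cosmetic (a named auxiliary predicate versus an inline quantifier, and $\leq 200n$ versus $< 200n+1$).
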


\begin{proof}
This is proved with the Walnut commands
\begin{verbatim}
def mot5faceq "?lsd_5 At (t<n) => (MOT5[i+t]=MOT5[j+t])":
eval tmp "?lsd_5 An (n>=1) => Ai Ej (j>i) & (j<i+200*n+1) &
    $mot5faceq(i,j,n)":
eval tmp "?lsd_5 An (n>=1) => Ai Ej (j>i) & (j<i+200*n) &
    $mot5faceq(i,j,n)":
\end{verbatim}
noting that the first \texttt{eval} command returns ``TRUE'' and
the second returns ``FALSE''.
\end{proof}

Burns \cite{Bur_E} studied ${\bf m}_p$ for $p$ between $7$
and $29$ using automata computed using the Rowland--Yassawi algorithm.
Among other things, his work suggests that depending on the value of $p$, the sequence
${\bf m}_p$ either behaves like ${\bf m}_3$, where $0$ has density $1$ (i.e.,
$p = 7, 17, 19$), or ${\bf m}_p$ behaves like ${\bf m}_5$, where $0$ has
density $<1$ (i.e., $p = 11, 13, 23, 29$).  Many of Burns' results could also be
obtained using Walnut.

\begin{problem}\label{mot_rec}Characterize the primes $p$ for which ${\bf m}_p$ is
uniformly recurrent.
\end{problem}

Indeed, based on Burns' results and the discussion in the next section,
we guess that the answer to this problem is given by the sequence
$$2, 5, 11, 13, 23, 29, 31, 37, 53, \ldots$$
of primes that do not divide any central trinomial number.  This is
sequence 
\seqnum{A113305} of \cite{OEIS}.

\section{Central trinomial coefficients}
The Motzkin numbers are closely related to the \emph{central trinomial
coefficients} $T_n$.  The usual definition of $T_n$ is as the coefficient of $x^n$
in $(1+x+x^2)^n$, but the definition
$$T_n = \sum_{k \geq 0}\binom{n}{2k}\binom{2k}{k}$$ better illustrates the connection between
these numbers and the Motzkin numbers.  The number $T_n$ is also
the constant term of $(1/x+1+x)^n$, which is the form needed for the Rowland--Zeilberger
algorithm.  Deutsch and Sagan
studied the divisibility of $T_n$ modulo primes and Noe \cite{Noe06} did the same
for generalized central trinomial numbers.

\begin{theorem}[Deutsch and Sagan]
The central trinomial coefficient $T_n$ satisfies
\[
T_n \equiv_3
\begin{cases}
1, & \text{ if } (n)_3  \text{ does not contain a } 2;\\
0, & \text{ otherwise.}
\end{cases}
\]
\end{theorem}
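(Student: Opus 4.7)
The plan is to follow the template established throughout the paper. First I would apply the Rowland--Zeilberger algorithm to the Laurent-polynomial representation $T_n = [x^0]\,(1/x + 1 + x)^n$ in order to produce a finite automaton, call it \texttt{TRI3}, that reads $(n)_3$ in lsd-first format and outputs $T_n \bmod 3$. Given the analogously small automata obtained for Catalan and Motzkin numbers modulo small primes, I expect \texttt{TRI3} to have very few states; the theorem should even be visually apparent from the picture, but we verify it formally with Walnut.

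Next I would express the base-3 language ``digits drawn from $\{0,1\}$'' as a regular predicate. In Walnut this is the single command
\begin{verbatim}
reg no_two lsd_3 "[01]*":
\end{verbatim}
which builds a two-state automaton accepting exactly those $n$ whose base-3 representation avoids the digit~2. The entire theorem then reduces to one equivalence check:
\begin{verbatim}
eval check_tri3 "?lsd_3 An (TRI3[n]=@1 <=> $no_two(n)) &
    (TRI3[n]=@0 <=> ~$no_two(n))":
\end{verbatim}
If this returns \texttt{TRUE}, both cases of the case analysis are verified simultaneously; as a bonus, the query certifies that $T_n$ never takes the residue $2$ modulo $3$.

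The main obstacle, such as it is, is purely practical rather than conceptual: one must trust the Rowland--Zeilberger code to terminate and to produce an automaton small enough for Walnut to consume. Once \texttt{TRI3} is in hand, Walnut's decision procedure handles the quantifier and the product/minimization automatically, so there is no real combinatorial difficulty. If for some reason the single-line check above did not go through directly, a natural fallback would be to split the verification into two separate \texttt{eval} statements (one per case) and examine any counterexample returned by the first failing query.
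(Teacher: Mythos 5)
Your approach matches the paper's: the authors simply remark that the result ``is also immediate from the automaton produced by the Rowland--Zeilberger algorithm,'' and your proposal just makes that observation explicit with a \texttt{reg}/\texttt{eval} verification in Walnut (modulo trivial syntax details). This is essentially the same proof, spelled out slightly more formally than the paper bothers to.
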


Deutsch and Sagan proved this by an application of Lucas' Theorem;
it is also immediate from the automaton produced by the Rowland--Zeilberger
algorithm.  As with the Motzkin numbers, the behaviour of $T_n$ modulo $5$ is
rather different from that modulo $3$.  We collect some properties below
(compare with those of ${\bf m}_5$ from the previous section).

\begin{theorem}
Let ${\bf t}_5 = (T_n \bmod 5)_{n \geq 0}$.  Then
\begin{enumerate}
\item ${\bf t}_5$ does not contain $0$ (i.e., $T_n$ is never divisible by $5$);
\item ${\bf t}_5$ has critical exponent $3$; furthermore,
the only cubes in ${\bf t}_5$ are $111$, $222$, $333$, and $444$;
\item ${\bf t}_5$ is uniformly recurrent;  Furthermore,
if $w$ has length $n$ and occurs at position $i$ in ${\bf t}_5$,
then there is another occurrence of $w$ at some position $j$,
where $i < j \leq i+200n-192$.  The constant $192$ cannot be replaced with $193$.
\item If $w$ has length $n$ and appears in ${\bf t}_5$, then
$w$ appears in the prefix of ${\bf t}_5$ of length $121n$.  The
quantity $121n$ cannot be replaced with $121n-1$.
\end{enumerate}
\end{theorem}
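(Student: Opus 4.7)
The plan is to follow the template set by the analogous results for ${\bf m}_5$, replacing the automaton \texttt{MOT5} by an automaton \texttt{T5} produced by the Rowland--Zeilberger algorithm applied to the constant-term representation $T_n = [x^0](1/x+1+x)^n$. Once this automaton is in hand, each of the four claims reduces to a single (or small) Walnut query, so the whole argument is really just a matter of assembling the right first-order formulas over $\mathbb{N}$ with the base-$5$ numeration system.

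For part~(1), I would run
\begin{verbatim}
eval t5_nozero "?lsd_5 An T5[n]!=@0":
\end{verbatim}
and verify that it returns \texttt{TRUE}. For part~(2), I would reuse exactly the repetition-detection formulas already employed for ${\bf m}_5$, namely
\begin{verbatim}
eval tmp1 "?lsd_5 Ei,n (n>=1) & At (t<=2*n) => T5[i+t]=T5[i+t+n]":
eval tmp2 "?lsd_5 Ei (n>=1) & At (t<2*n) => T5[i+t]=T5[i+t+n]":
\end{verbatim}
and check that the first returns \texttt{FALSE} (no factor of exponent exceeding $3$) and the second produces an automaton accepting only $n=1$ (cubes have order $1$). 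A direct inspection of a prefix then verifies that each of $111,222,333,444$ actually appears.

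Part~(3) is handled with the \texttt{t5faceq} factor-equality predicate and the two-sided search
\begin{verbatim}
def t5faceq "?lsd_5 At (t<n) => (T5[i+t]=T5[j+t])":
eval tmp3 "?lsd_5 An (n>=1) => Ai Ej (j>i) & (j<=i+200*n-192) & $t5faceq(i,j,n)":
eval tmp4 "?lsd_5 An (n>=1) => Ai Ej (j>i) & (j<=i+200*n-193) & $t5faceq(i,j,n)":
\end{verbatim}
expecting \texttt{TRUE} for the first and \texttt{FALSE} for the second. Part~(4) follows from
\begin{verbatim}
eval tmp5 "?lsd_5 An (n>=1) => Ai Ej (j+n<=121*n) & $t5faceq(i,j,n)":
eval tmp6 "?lsd_5 An (n>=1) => Ai Ej (j+n<=121*n-1) & $t5faceq(i,j,n)":
\end{verbatim}
with \texttt{TRUE} and \texttt{FALSE}, respectively.

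The conceptual work is trivial; the real obstacle is twofold. First, one must actually compute the Rowland--Zeilberger automaton for $T_n \bmod 5$ and confirm that Walnut can ingest it in the usual lsd format. Second, and more delicately, the sharp constants $200n-192$ and $121n$ are not predicted by the method: I would guess and verify them by running the queries with a symbolic coefficient, observing which small adjustment makes the universally quantified statement flip from false to true. Provided the automaton is of modest size (as is the case for ${\bf m}_5$ and should be for $T_n \bmod 5$, since $T_n$ and $M_n$ satisfy closely related recurrences), all six Walnut calls should terminate quickly, and the theorem follows.
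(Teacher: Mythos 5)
Your proposal is correct and follows essentially the same route as the paper: compute the Rowland--Zeilberger automaton for $T_n \bmod 5$, then reuse the Motzkin-style Walnut queries for parts (1)--(3) and an appearance-in-prefix query (logically equivalent to the paper's \texttt{pr\_tri5} predicate, with the roles of $i$ and $j$ swapped) for part (4). The paper does exactly this, so no further comment is needed.
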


\begin{proof}
Properties 1)--3) can all be obtained by similar Walnut commands to those used in the previous
section for the Motzkin numbers.  We just need the automaton for ${\bf t}_5$.
The Rowland--Zeilberger algorithm gives the pleasantly symmetric automaton
in Figure~\ref{TRI5}.

  \begin{figure}[htb]
    \centering
    \includegraphics[scale=0.60]{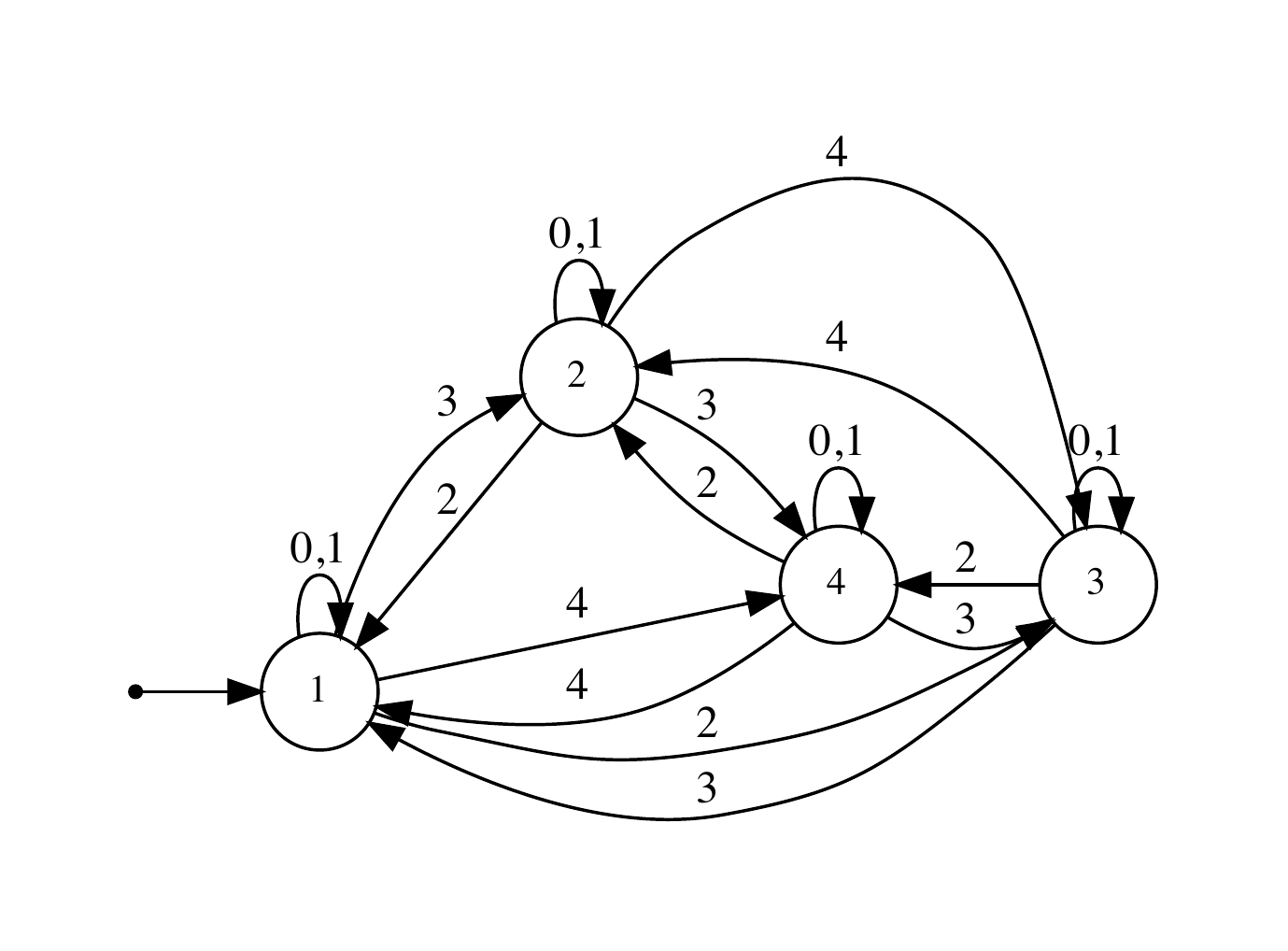}
    \caption{Automaton for $T_n \bmod 5$}\label{TRI5}
  \end{figure}

For Property 4), we use the Walnut commands
\begin{verbatim}
def pr_tri5 "?lsd_5 Aj Ei i+n<=s & At t<n => TRI5[i+t]=TRI5[j+t]":
eval tmp "?lsd_5 An $pr_tri5(n,121*n)":
eval tmp "?lsd_5 An $pr_tri5(n,121*n-1)":
\end{verbatim}
and note that the last two commands return "TRUE" and "FALSE",
respectively.
\end{proof}

We should note that in the special case of the central trinomial coefficients,
it is not necessary to resort to either the Rowland--Zeilberger or Rowland--Yassawi algorithms to compute the automaton for $T_n \bmod p$.  Using the following result of Deutsch and Sagan,
one can directly define the automaton for $T_n \bmod p$.

\begin{theorem}{(Deutsch and Sagan)}\label{tri_lucas}
Let $(n)_p = n_0n_1\cdots n_r$. Then $$T_n \equiv_p \prod_{i=0}^r T_{n_i}.$$
\end{theorem}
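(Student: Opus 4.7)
The plan is to mimic the classical proof of Lucas' theorem, exploiting the generating-function definition $T_n = [x^n](1+x+x^2)^n$ together with the Frobenius endomorphism on $\mathbb{F}_p[x]$.

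First, I would write $n = \sum_{i=0}^r n_i p^i$ with $0 \le n_i \le p-1$, so that
$$(1+x+x^2)^n = \prod_{i=0}^r \bigl((1+x+x^2)^{p^i}\bigr)^{n_i}.$$
Next, since $(a+b+c)^p \equiv a^p + b^p + c^p \pmod{p}$, iterating the Frobenius map gives $(1+x+x^2)^{p^i} \equiv 1+x^{p^i}+x^{2p^i} \pmod{p}$, and therefore
$$(1+x+x^2)^n \equiv \prod_{i=0}^r \bigl(1+x^{p^i}+x^{2p^i}\bigr)^{n_i} \pmod{p}.$$

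The third step is to read off the coefficient of $x^n$ on the right. For each $i$, one expands
$$\bigl(1+x^{p^i}+x^{2p^i}\bigr)^{n_i} = \sum_{k=0}^{2n_i} \bigl([x^k](1+x+x^2)^{n_i}\bigr)\, x^{k p^i},$$
so the coefficient of $x^n$ in the product is
$$\sum \prod_{i=0}^r [x^{k_i}](1+x+x^2)^{n_i},$$
the sum ranging over tuples $(k_0,\ldots,k_r)$ with $0 \le k_i \le 2n_i$ and $\sum_i k_i p^i = n$.

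The main obstacle is the uniqueness claim: because $2n_i$ may exceed $p-1$, this is not literally a base-$p$ digit comparison. I would handle it by setting $e_i = k_i - n_i$, so that $|e_i| \le n_i \le p-1$ and $\sum_i e_i p^i = 0$. Letting $j$ be the least index with $e_j \neq 0$ and dividing by $p^j$ forces $p \mid e_j$, contradicting $|e_j|\le p-1$. Hence $k_i = n_i$ for all $i$, and the unique contributing term is $\prod_i [x^{n_i}](1+x+x^2)^{n_i} = \prod_i T_{n_i}$, giving $T_n \equiv_p \prod_{i=0}^r T_{n_i}$ as required.
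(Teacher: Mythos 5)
Your argument is correct. The paper itself gives no proof of this theorem --- it is quoted from Deutsch and Sagan and used as a black box to build the automaton for $T_n \bmod p$ --- so there is nothing internal to compare against; your write-up would in fact supply a proof the paper omits. The route you take is the natural Lucas-style one: Frobenius gives $(1+x+x^2)^{p^i} \equiv 1+x^{p^i}+x^{2p^i} \pmod{p}$, and then one extracts the coefficient of $x^n$ from the product. You correctly identify and handle the one genuine subtlety, namely that the exponents $k_i$ range over $[0,2n_i]$ rather than $[0,p-1]$, so the representation $\sum_i k_i p^i = n$ is not automatically the base-$p$ expansion; your carry argument (setting $e_i = k_i - n_i$, noting $|e_i| \le n_i \le p-1$, and showing the least nonzero $e_j$ would have to be divisible by $p$) closes this gap cleanly. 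This is essentially the argument Deutsch and Sagan give (they prove such congruences for generalized central trinomial coefficients via exactly this generating-function Frobenius computation), so your proposal is a faithful and complete reconstruction of the cited result.
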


An immediate consequence is that $T_n$ is divisible by $p$ if and only if one of the $T_{n_i}$
is divisible by $p$.
This criterion allows one to determine the primes that do not divide any central trinomial
coefficient; i.e., those in \seqnum{A113305} of \cite{OEIS}, which we conjectured in the
previous section to be the ones that answer the question of Problem~\ref{mot_rec}.

We can also give the following sufficient condition for ${\bf t}_p = (T_n \bmod p)_{n \geq 0}$
to be uniformly recurrent.  For $i =0,\ldots,p-1$, let $\tau_i = T_i \bmod p$.

\begin{theorem}
Let $p$ be prime and let $\Sigma = \{\tau_i : i = 0,\ldots,p-1\}$.  If $\Sigma$ does not
contain $0$ but does contain a primitive root modulo $p$, then ${\bf t}_p$ is uniformly
recurrent.
\end{theorem}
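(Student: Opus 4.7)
My plan is to realize ${\bf t}_p$ as the fixed point of a primitive $p$-uniform substitution and then invoke the classical symbolic-dynamics theorem that fixed points of primitive substitutions are uniformly recurrent.

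First I would introduce the $p$-uniform morphism $\sigma$ on $\mathbb{Z}/p\mathbb{Z}$ defined by $\sigma(a) = (a\tau_0)(a\tau_1)\cdots(a\tau_{p-1})$. By Theorem~\ref{tri_lucas}, the $n$-th letter of $\sigma^k(1)$ (for $n < p^k$) is $\prod_i \tau_{n_i} = T_n \bmod p$, where $n_0 n_1 \cdots n_{k-1}$ is $(n)_p$ padded with leading zeros to length $k$. Since $\tau_0 = T_0 = 1$, the word $\sigma^k(1)$ is a prefix of $\sigma^{k+1}(1)$ for every $k$, so ${\bf t}_p = \lim_k \sigma^k(1)$ is the fixed point of $\sigma$ starting from the letter $1$.

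The main step is to verify that $\sigma$ is primitive on the alphabet $(\mathbb{Z}/p\mathbb{Z})^\times$. The assumption $0 \notin \Sigma$ guarantees that $\sigma$ preserves this alphabet, and the letters appearing in $\sigma^k(a)$ form exactly $a \cdot \Sigma^{(k)}$, where $\Sigma^{(k)}$ denotes the set of $k$-fold products of elements of $\Sigma$. Because $1 = \tau_0 \in \Sigma$, the chain $\Sigma^{(1)} \subseteq \Sigma^{(2)} \subseteq \cdots$ is nondecreasing with union the multiplicative subgroup $\langle \Sigma \rangle$. The primitive-root hypothesis forces $\langle \Sigma \rangle = (\mathbb{Z}/p\mathbb{Z})^\times$, and saturation occurs by $k = p-2$: writing $g^j = g^j \cdot 1^{(p-2)-j}$ for a primitive root $g \in \Sigma$ exhibits every element of $(\mathbb{Z}/p\mathbb{Z})^\times$ as a product of exactly $p-2$ elements of $\Sigma$. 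Hence $\sigma^{p-2}(a)$ contains every letter of $(\mathbb{Z}/p\mathbb{Z})^\times$ for every nonzero $a$, which is primitivity.

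Granting primitivity, the classical theorem on primitive substitutions immediately yields that the fixed point ${\bf t}_p$ is uniformly recurrent. The main obstacle is the primitivity verification above, where the identity $\tau_0 = 1$ combines with the primitive-root assumption to saturate the product sets $\Sigma^{(k)}$; the remaining steps are a direct translation of Theorem~\ref{tri_lucas} into the language of substitutive sequences.
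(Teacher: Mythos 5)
Your proposal is correct and follows essentially the same route as the paper: realize ${\bf t}_p$ as the fixed point of the $p$-uniform morphism $a \mapsto (a\tau_0)(a\tau_1)\cdots(a\tau_{p-1})$ via Theorem~\ref{tri_lucas}, prove primitivity by using $\tau_0=1$ together with the primitive root to accumulate all nonzero residues in the iterated images, and conclude by the standard theorem that fixed points of primitive morphisms are uniformly recurrent. Your explicit description of the letter set of $\sigma^k(a)$ as $a\cdot\Sigma^{(k)}$ and the saturation bound $k=p-2$ is a slightly cleaner packaging of the same primitivity argument the paper gives.
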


\begin{proof}
Clearly the order of the product in Theorem~\ref{tri_lucas} does not matter; it follows then
that Theorem~\ref{tri_lucas} holds for the most-significant-digit-first representation
of $n$, as well as the least-significant-digit-first representation.  If we consider
Theorem~\ref{tri_lucas} with $n$ written in msd-first notation, we see that ${\bf t}_p$
is generated by iterating the morphism $f : \Sigma^* \to \Sigma^*$ defined by
$$f(\tau_i) = (\tau_i\tau_0 \bmod p)(\tau_i\tau_1 \bmod p)\cdots (\tau_i\tau_{p-1} \bmod p)$$
for $i = 0,\ldots,p-1$; i.e., ${\bf t}_p = f^\omega(1)$.

Recall that if there exists $t$ such that for every $a,b \in \Sigma$ the word $f^t(a)$
contains $b$, we say that $f$ is a \emph{primitive morphism}.  Now $\tau_0=1$, so for
$i=0,\ldots,p-1$, we can write $f(\tau_i)=\tau_i x_i$ for some word $x_i$.  It follows
that $f^p(\tau_i) = \tau_i f(x_i) f^2(x_i)\cdots f^{p-1}(x_i)$.  Furthermore,
if $0 \notin \Sigma$ and $x_0$ contains a primitive root modulo $p$, then for every $i$,
each non-zero residue modulo $p$ appears in one of $\tau_i, f(x_i), f^2(x_i), \ldots,
f^{p-1}(x_i)$.  This proves that the morphism $f$ is primitive.  A standard result
from the theory of morphic sequences states that any fixed point of a primitive morphism
is uniformly recurrent \cite[Theorem~10.9.5]{AS03}.
\end{proof}

\begin{example}
For $p=5$, we have $(T_0,T_1,T_2,T_3,T_4) = (1,1,3,7,19)$, so $(\tau_0,\tau_1,\tau_2,\tau_3,\tau_4) =
(1,1,3,2,4)$ contains the primitive root $2$.  The word
$${\bf t}_5 = 113241132433412221434423111324\cdots$$ is
uniformly recurrent and is equal to
$f^\omega(1)$, where $f$ is the morphism defined by
\begin{align*}
1 &\to 11324\\
2 &\to 22143\\
3 &\to 33412\\
4 &\to 44231.
\end{align*}
\end{example}

A computer calculation shows that for each prime $p$ appearing in the list of
initial values $2,5,11,13,\ldots,479$ of \seqnum{A113305}, the first $p$
terms of ${\bf t}_p$ always contain a primitive root modulo $p$.
Hence, each of these ${\bf t}_p$'s are uniformly recurrent.

\section{Catalan numbers}
Alter and Kubota \cite{AK73} studied the sequences ${\bf c}_p = (C_n \bmod p)_{n
  \geq 0}$, where $p$ is prime.  They proved that the runs of $0$'s in
${\bf c}_p$ have lengths
\begin{equation}\label{cp_0runs}
  \frac{p^{m+1+\delta_{3p}}-3}{2},
\end{equation}
where $\delta_{3p}$ is $1$ when $p=3$ and $0$ otherwise.  This implies, of
course, that for every prime $p$, the sequence ${\bf c}_p$ is not
uniformly recurrent.  Alter and Kubota also
proved that the blocks of non-zero values in ${\bf c}_p$ have length
$$\frac{p+3(1+2\delta_{3p})}{2}.$$
For $p=3$, Deutsch and Sagan~\cite[Theorem~5.2]{DS06} gave a complete
characterization of ${\bf c}_3$.  We can obtain a similar
characterization using Walnut.

\begin{theorem}[Deutsch and Sagan]\label{c3_0runs}
  The runs of $0$'s in ${\bf c}_3$ begin at positions $n$, where either
  $$(n)_3 \in 211^* \text{ or } (n)_3 \in 211^*0\{0,1\}^*,$$
  and have length $(3^{i+2}-3)/2$, where $i$ is the length of the
  leftmost block of $1$'s in $(n)_3$.  The blocks of non-zero values
  in ${\bf c}_3$ are given by the following:
  \begin{itemize}
  \item The block $11222$ occurs at position $0$.
  \item The block $111222$ occurs at all positions $n$ where $(n)_3 \in 222^*0w$
  for some $w \in \{0,1\}^*$ that contains an odd number of $1$'s.
  \item The block $222111$ occurs at all positions $n$ where $(n)_3 \in 222^*0w$
  for some $w \in \{0,1\}^*$ that contains an even number of $1$'s.
  \end{itemize}
\end{theorem}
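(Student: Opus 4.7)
The plan is to follow the Walnut-based template used earlier in the paper. The Rowland--Zeilberger algorithm produces an automaton \texttt{CAT3} that, when fed $(n)_3$ in lsd-first, outputs $C_n \bmod 3$; every assertion in the theorem is a statement about the automatic sequence ${\bf c}_3 = \texttt{CAT3}[n]$ and should therefore reduce to a finite list of \texttt{eval} commands.

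To handle the starting positions of the 0-runs, define
\begin{verbatim}
def cat3_zero_run_start "?lsd_3 CAT3[n]=@0 & (n=0 | CAT3[n-1]!=@0)":
\end{verbatim}
and, in parallel, define a predicate whose accepted $n$'s are those whose msd-first base-3 expansion lies in $2\,1^+ \cup 2\,1^+0\{0,1\}^*$. Both are regular in base-3 digits (the second via a \texttt{reg} command), so a single \texttt{eval} checking their equivalence for all $n$ completes this part.

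The subtler issue is the length formula. The key observation is
\[
\frac{3^{i+2}-3}{2} \;=\; \underbrace{1\cdots 1}_{i+1}0_{\,3}.
\]
Hence the relation between the start $n$ of a 0-run and the run length $\ell$ is a synchronized regular relation on base-3 expansions: strip the leading $2$ of $(n)_3$, keep the following maximal block of $1$'s, prepend one extra $1$, and append a $0$. Encode this as a Walnut predicate \texttt{run\_length(n,ell)} built from an automaton on the product alphabet, and then verify with a command of the form
\begin{verbatim}
eval check_length "?lsd_3 An,ell ($cat3_zero_run_start(n) & $run_length(n,ell))
    => ((At t<ell => CAT3[n+t]=@0) & CAT3[n+ell]!=@0)":
\end{verbatim}

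The three cases for the non-zero blocks follow the same pattern. Build regular predicates for the msd-first languages $222^*0w$ with $w\in\{0,1\}^*$ containing an odd (respectively, even) number of $1$'s (parity of $1$'s is tracked by a two-state automaton), plus the isolated case $n=0$, and then use \texttt{eval} to verify that at each such $n$ the five or six consecutive values of \texttt{CAT3} spell $11222$, $111222$, or $222111$, and that the surrounding entries are $0$ so the blocks are maximal. The main obstacle throughout is the length verification, because the run length grows exponentially in $i$; the trick that makes this tractable in Walnut is precisely the clean base-3 shape $1^{i+1}0$ of $(3^{i+2}-3)/2$, which turns an apparently exponential relationship into a synchronized regular one.
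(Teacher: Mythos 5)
Your proposal is correct and follows essentially the same route as the paper: both reduce every claim to Walnut computations on the Rowland--Zeilberger automaton for $C_n \bmod 3$, with the key point (implicit in the paper's inspection of the \texttt{cat3max0} automaton, explicit in your \texttt{run\_length} predicate) being that the start-position/run-length pair is a synchronized regular relation because $(3^{i+2}-3)/2$ has the base-$3$ form $1^{i+1}0$. The only difference is cosmetic: the paper computes the two-variable automaton for maximal $0$-runs and reads the characterization off its transition labels by hand, whereas you encode the claimed languages as predicates and let an \texttt{eval} certify the equivalence (and you should state your regular expressions in the paper's lsd-first convention rather than msd-first).
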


\begin{proof}
  We use the automaton for ${\bf c}_3$ given in Figure~\ref{CAT3}.

  \begin{figure}[htb]
    \centering
    \includegraphics[scale=0.75]{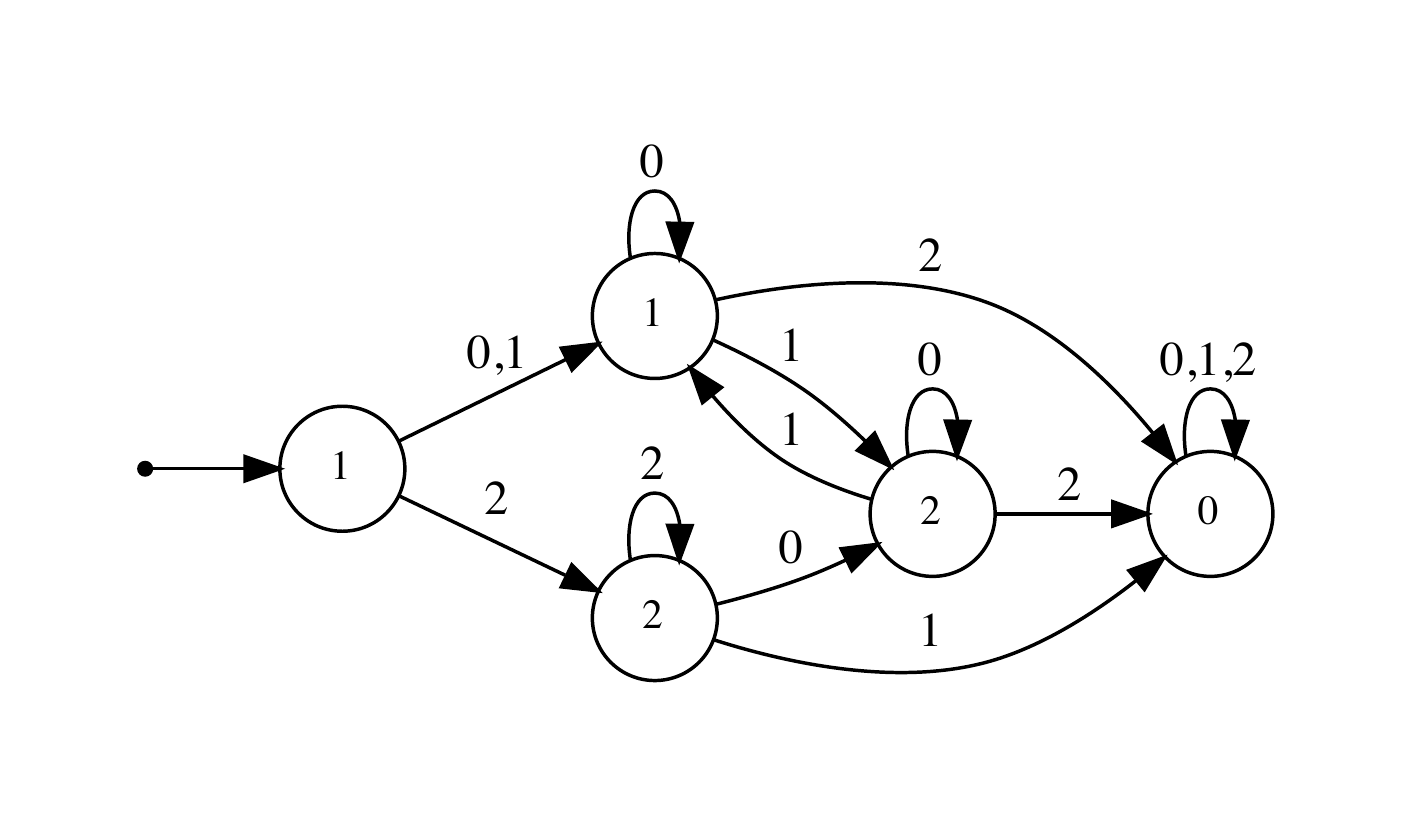}
    \caption{Automaton for $C_n \bmod 3$}\label{CAT3}
  \end{figure}

  The Walnut command
\begin{verbatim}
eval cat3max0 "?lsd_3 n>=1 & (At t<n => CAT3[i+t]=@0) &
    CAT3[i+n]!=@0 & (i=0|CAT3[i-1]!=@0)":
\end{verbatim}
  produces the automaton in Figure~\ref{cat3max0}.
  \begin{figure}[htb]
    \centering
    \includegraphics[scale=0.75]{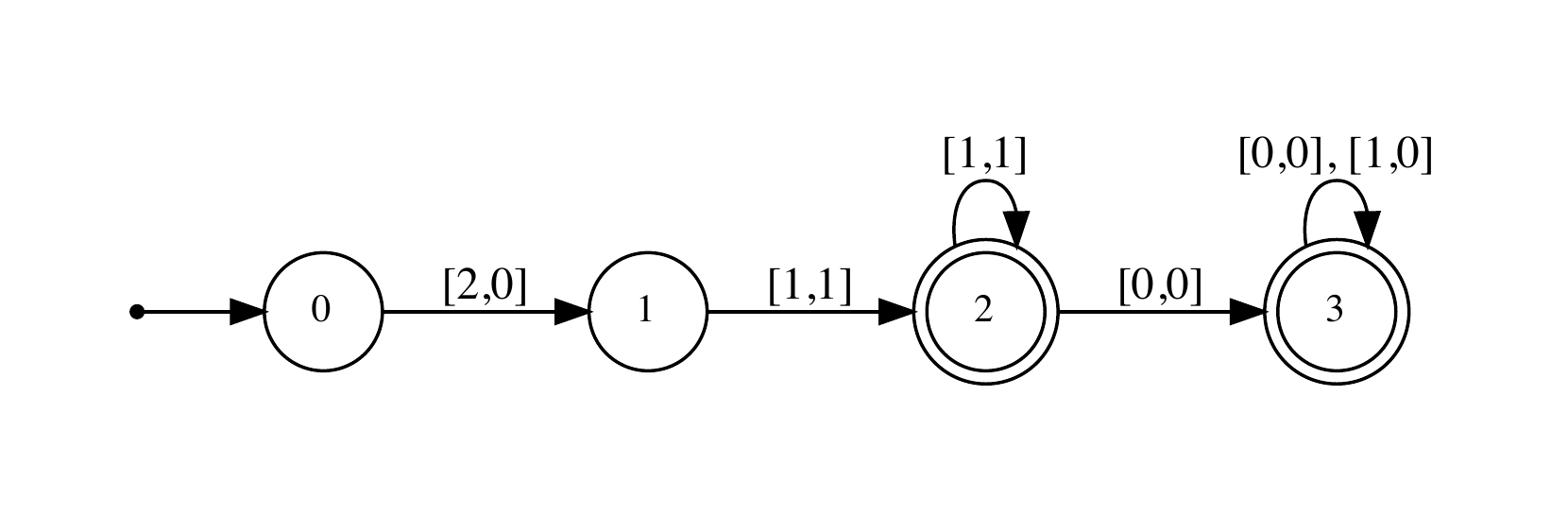}
    \caption{Automaton for runs of $0$'s in ${\bf c}_3$}\label{cat3max0}
  \end{figure}
  Examining the transition
  labels of the first component of the input gives the claimed representation for
  the starting positions of the runs of $0$'s and examining the transition
  labels of the second component gives the claimed length.
  
  For the blocks of non-zero values, we execute the Walnut commands
\begin{verbatim}
eval cat3max12 "?lsd_3 n>=1 & (At t<n => CAT3[i+t]!=@0) &
    CAT3[i+n]=@0 & (i=0|CAT3[i-1]=@0)":
eval cat3_111222 "?lsd_3 $cat3max12(i,6) & CAT3[i]=@1 &
    CAT3[i+1]=@1 & CAT3[i+2]=@1 & CAT3[i+3]=@2 &
    CAT3[i+4]=@2 & CAT3[i+5]=@2":
eval cat3_222111 "?lsd_3 $cat3max12(i,6) & CAT3[i]=@2 &
    CAT3[i+1]=@2 & CAT3[i+2]=@2 & CAT3[i+3]=@1 &
    CAT3[i+4]=@1 & CAT3[i+5]=@1":
eval cat3all12 "?lsd_3 Ai,n $cat3max12(i,n) =>
    (i=0 | $cat3_111222(i) | $cat3_222111(i))":
\end{verbatim}
to obtain the automata in Figures~\ref{cat3_111222} and \ref{cat3_222111}.
  \begin{figure}[htb]
    \centering
    \includegraphics[scale=0.75]{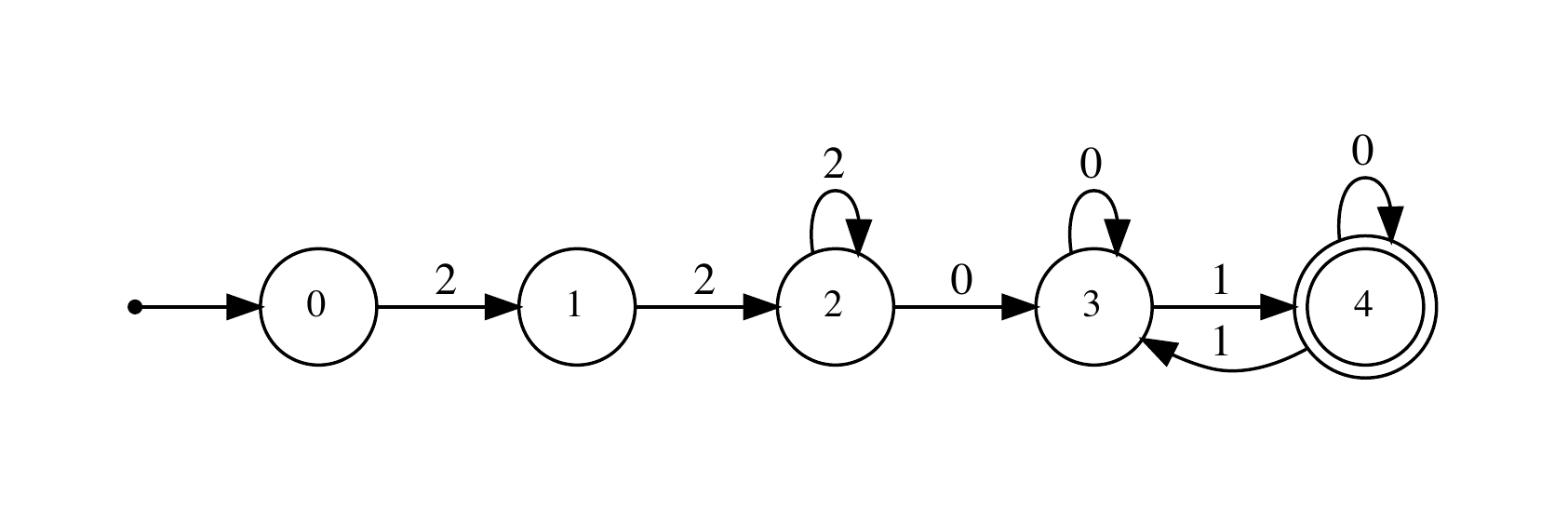}
    \caption{Automaton for blocks $111222$ in ${\bf c}_3$}\label{cat3_111222}
  \end{figure}
  \begin{figure}[htb]
    \centering
    \includegraphics[scale=0.75]{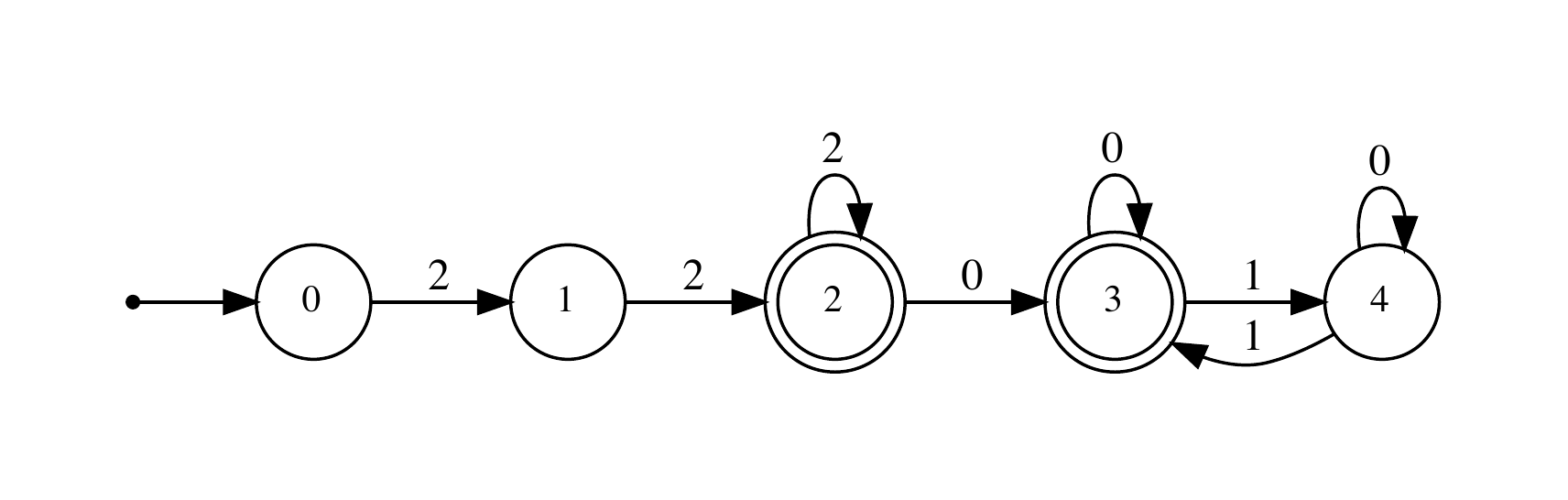}
    \caption{Automaton for blocks $222111$ in ${\bf c}_3$}\label{cat3_222111}
  \end{figure}
\end{proof}

Note that the length of the runs given in Theorem~\ref{c3_0runs} is
exactly what is given by the result of Alter and Kubota stated above
in Eq.~\eqref{cp_0runs}.

We can also perform the same calculation for $p=5$ to obtain

\begin{theorem}
 The runs of $0$'s in ${\bf c}_5$ begin at positions $n$, where either
  $$(n)_5 \in 32^* \text{ or } (n)_5 \in 32^*\{0,1\}\{0,1,2\}^*,$$
  and have length $(5^{i+2}-3)/2$, where $i$ is the length of the leftmost
  block of $2$'s in $(n)_5$.
\end{theorem}

\begin{proof}
  We use the automaton for ${\bf c}_5$ given in Figure~\ref{CAT5}.
  \begin{figure}[htb]
    \centering
    \includegraphics[scale=0.65]{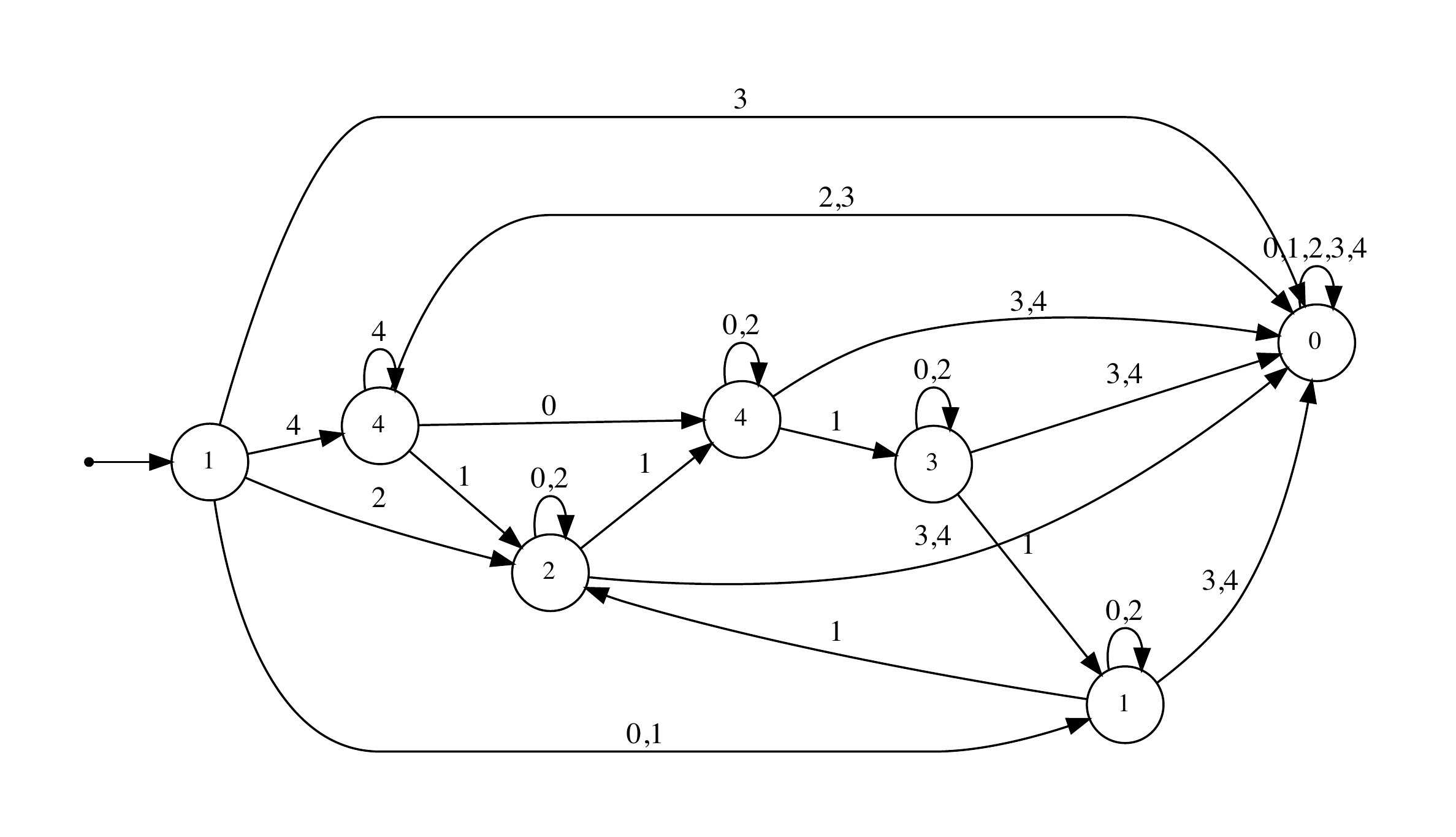}
    \caption{Automaton for $C_n \bmod 5$}\label{CAT5}
  \end{figure}
The Walnut command
\begin{verbatim}
eval cat5max0 "?lsd_5 n>=1 & (At t<n => CAT5[i+t]=@0) &
    CAT5[i+n]!=@0 & (i=0|CAT5[i-1]!=@0)":
\end{verbatim}
  produces the automaton in Figure~\ref{cat5max0}.
  \begin{figure}[htb]
    \centering
    \includegraphics[scale=0.75]{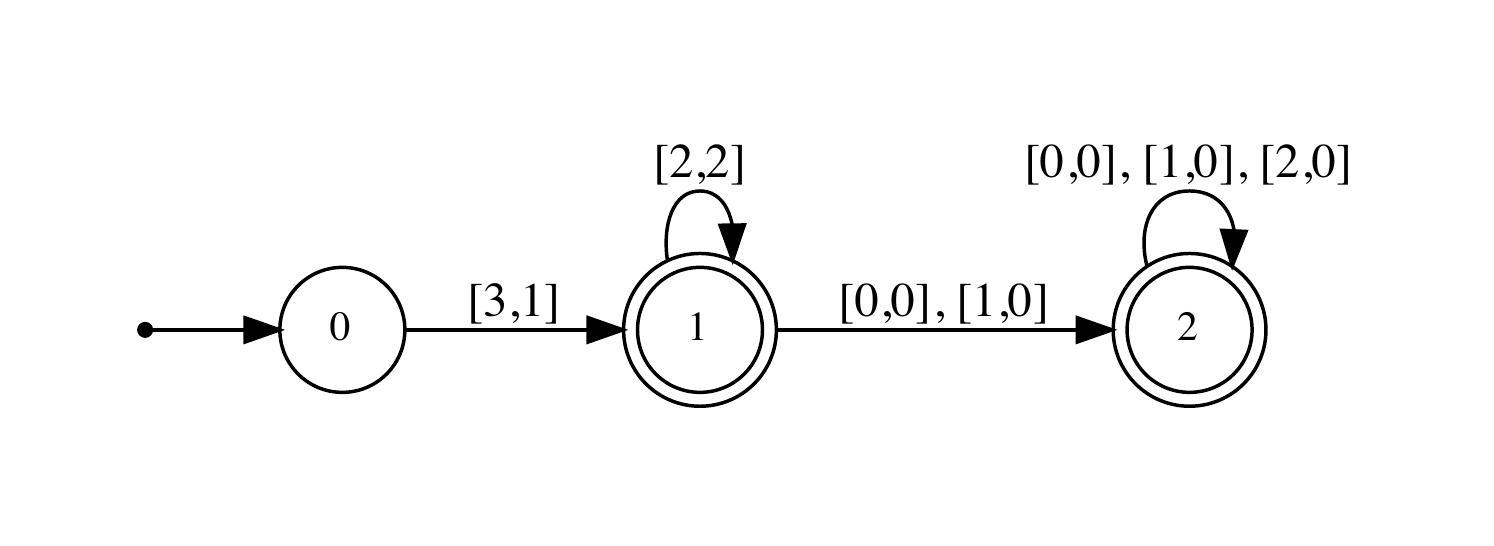}
    \caption{Automaton for runs of $0$'s in ${\bf c}_5$}\label{cat5max0}
  \end{figure}
  Examining the transition
  labels, as in the proof of Theorem~\ref{c3_0runs}, gives the result.
\end{proof}

Again, note that the lengths of the runs match what is given by
Eq.~\eqref{cp_0runs}.

\begin{theorem}
The sequence $c_5$ begins with the non-zero block $112$.  The other
non-zero blocks in $c_5$ are $1331$, $2112$, $3443$, and $4224$.
\end{theorem}

\begin{proof}
The Walnut command
\begin{verbatim}
eval cat5max1234 "?lsd_5 n>=1 & (At t<n => CAT5[i+t]!=@0) &
    CAT5[i+n]=@0 & (i=0|CAT5[i-1]=@0)":
\end{verbatim}
  produces the automaton in Figure~\ref{cat5max1234}.
  \begin{figure}[htb]
    \centering
    \includegraphics[scale=0.75]{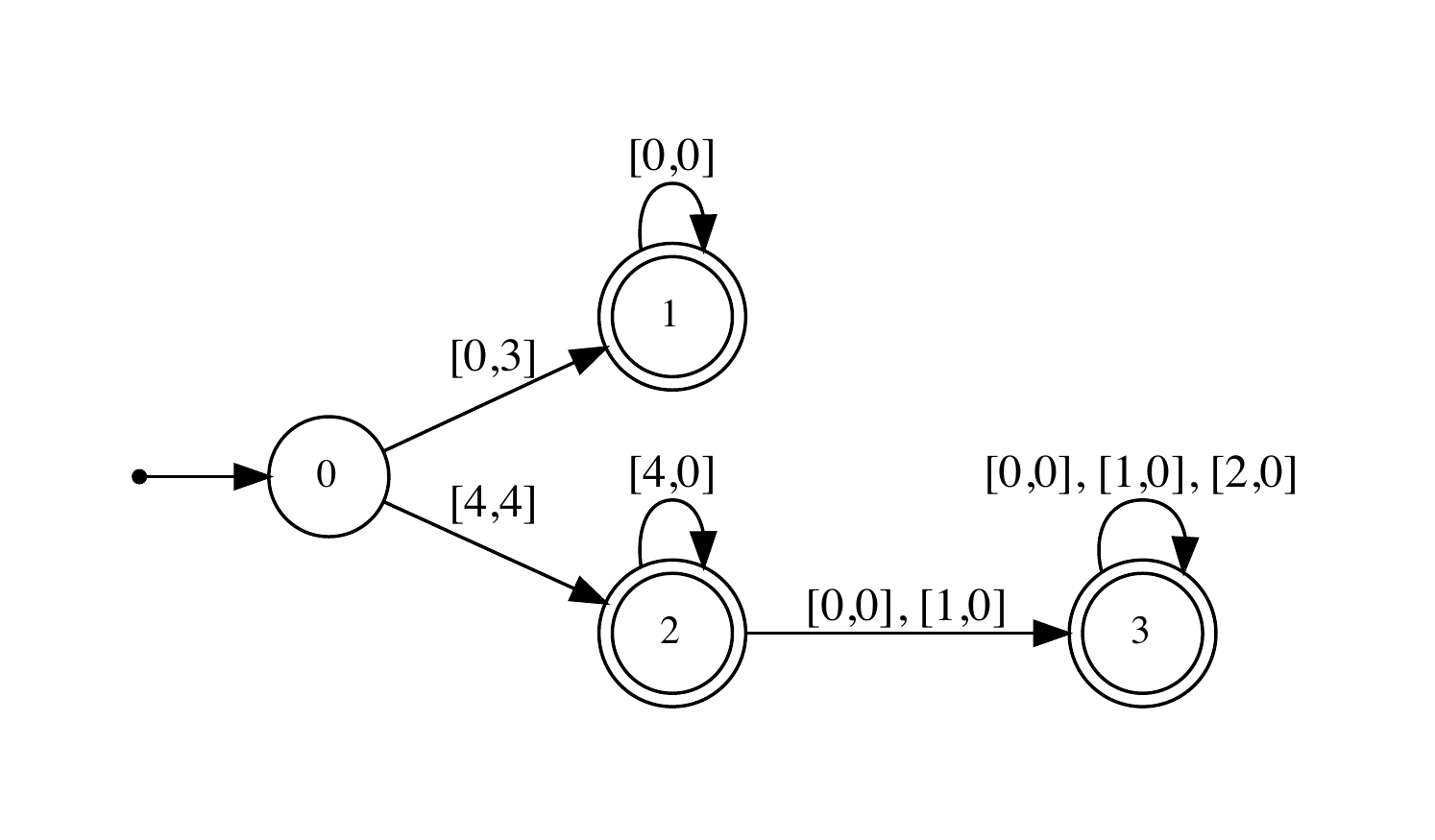}
    \caption{Automaton for non-zero blocks in ${\bf c}_5$}\label{cat5max1234}
  \end{figure}
We see that the initial non-zero block has length $3$ and all others
have length $4$.  We omit the Walnut command to verify
the values of these length $4$ blocks, but it is easy to
formulate.
\end{proof}

\section{Conclusion}
We have shown how to use Walnut to obtain automated proofs of certain
results in the literature concerning the Catalan and Motzkin numbers modulo $p$,
as well as the central trinomial coefficients modulo $p$.
We were also able to use Walnut to examine other properties of these
sequences that have not previously been explored, such as the presence (or
absence) of certain repetitive patterns and the property of being
uniformly recurrent.  We hope these results encourage other researchers to
continue to further explore these properties for other sequences.

\end{document}